\documentclass{amsart}
\usepackage{amssymb, amsmath}
\usepackage{fullpage}
\usepackage{array}
\usepackage{mathrsfs}
\usepackage{xcolor}
\usepackage{hyperref}
\usepackage{graphicx,color}
\usepackage{float}
\usepackage{tikz}
\usepackage{comment}
\usepackage{amsaddr} 
\usetikzlibrary{decorations.pathreplacing,calligraphy}

\newcommand{\R}{\mathbb{R}}
\newcommand{\N}{\mathbb{N}}

\newcommand{\Z}{\mathbb{Z}}

\DeclareMathOperator{\ase}{A_{\sigma \eta}}
\DeclareMathOperator{\as1}{A_{\sigma1}}
\DeclareMathOperator{\SL(2)}{SL(2,\R)}

\newtheorem{teorema}{Theorem}[section] 

\newtheorem{lema}[teorema]{Lemma}

\newcolumntype{C}[1]{>{\centering\let\newline\\\arraysetminus\hspace{0pt}}m{#1}}

\usepackage[utf8]{inputenc}
\usepackage{amsfonts}
\usepackage{amssymb}
\usepackage{verbatim}
\usepackage[pagewise]{lineno}
\numberwithin{equation}{section}

\title{Discontinuity example for the Lyapunov exponents on the boundary of the uniformly hyperbolic set}

\author{Raquel Saraiva$^{1}$}
\address{\small Instituto de Ciências Exatas, Universidade Federal de Minas Gerais, Av. Antônio Carlos 6627, \\
\small Belo Horizonte, 31270-901, MG, Brazil.\\
\small (e-mail: rss07@ufmg.br)}

\thanks{\footnotesize $^{1}$ Supported by FAPEMIG}

\begin{document}

\begin{abstract}
We present an example of a discontinuity point for the Lyapunov exponents when viewed as a function of the cocycle in a topology finer than the $C^0$-topology. The linear cocycle taking values in $\SL(2)$ is locally constant, defined over a Bernoulli shift and it lies on the boundary of the uniformly hyperbolic set. In particular, we show that it can be approximated, in the $C_{\delta-\log}$-topology, by cocycles whose Lyapunov exponents vanish.
\end{abstract}

\maketitle   

\section{Introduction}
Lyapunov exponents were introduced by Lyapunov in the late 19th century as a tool to investigate the stability of solutions to ordinary differential equations \cite{lya92}. Subsequent developments by the results of Furstenberg-Kesten \cite{furstenberg} and Oseledets \cite{oseledets}, established them  in the field of ergodic theory. They also play a crucial role in the study of smooth dynamical systems, especially in the context of $C^k$-diffeomorphisms on manifolds. 
In this sense, Pesin developed the theory of non-uniform hyperbolicity \cite{pes76}, \cite{pes77}. A $C^{1+\alpha}$-diffeomorphism preserving an ergodic measure with non-zero Lyapunov exponents admits, almost everywhere, well-defined local stable and unstable manifolds. This result extends several fundamental properties of uniformly hyperbolic systems (Axiom A). 

Beyond the hyperbolic theory, Lyapunov exponents exhibit deep connections with other fundamental invariants, notably metric entropy and the fractal dimension of invariant measures, as demonstrated in the works of Barreira, \cite{bar12}, Ledrappier, \cite{led86}, and Young, \cite{you82}. Motivated by the inherent analytical difficulties of smooth dynamics, the concept of Lyapunov exponents has been successfully extended to non-smooth frameworks, where they are studied via linear cocycles over continuous base dynamical systems. It is precisely within this setting that the present work is situated. 

More specifically, we are interested in their regularity properties, which is an important question in the theory of Lyapunov exponents. Oseledets’ theorem ensures that these exponents depend measurably on both the cocycle and the invariant measure. In contrast, we study their continuity with respect to variations only in the cocycle, with particular emphasis on $\mathrm{SL}(2,\mathbb{R})$-valued cocycles.

For random matrix products in dimension two, continuity has been established in both classical stochastic settings: the Bernoulli case by Bocker and Viana~\cite{bockerviana} and the Markov case by Malheiro and Viana~\cite{malheiroviana}. In the higher dimensional context the picture is more subtle, but a recent announcement by Avila, Eskin, and Viana~\cite{aev} concerning i.i.d. random products of $m\times m$ matrices indicates that continuity extends beyond the two-dimensional setting.

Although continuity of Lyapunov exponents has been verified in those specific settings, it is far from being a general feature. Indeed, discontinuities arise frequently. Fixing an ergodic measure $\mu$, Bochi-Mañé's Theorem \cite{bochi} provides the information about continuity of Lyapunov exponents of an aperiodic system on a compact metric space $(M, f,\mu)$. This theorem establishes that, within the space of continuous cocycles $C^0(M,\mathrm{SL}(2,\mathbb{R}))$ over $(M,f,\mu)$, the points of continuity for Lyapunov exponents are exactly those corresponding either to uniformly hyperbolic cocycles or to cocycles with vanishing Lyapunov exponents. In particular, any typical cocycle with nonzero Lyapunov exponents can be approximated in the $C^0$-topology by cocycles with zero Lyapunov exponents. 

It is a classical fact that the set of uniformly hyperbolic cocycles forms an open subset of $C^0(M,\mathrm{SL}(2,\mathbb{R}))$, \cite{viana}. A cocycle $A \in C^0(M,\mathrm{SL}(2,\mathbb{R}))$ belongs to the boundary of the uniformly hyperbolic set, denoted by $\partial \mathcal{UH}$, if and only if, there exist two sequences $(A_n)_{n \in \mathbb{N}} \subset \mathcal{UH}$ and $(\tilde{A}_n)_{n \in \mathbb{N}} \subset C^0(M,\mathrm{SL}(2,\mathbb{R})) \setminus \mathcal{UH}$ such that $A_n \to A$ and $\tilde{A}_n \to A$ in the $C^0$-topology. Moreover, the Bochi-Mañé's theorem provides a sharper description of points in the boundary of this set, whenever $A \in \partial \mathcal{UH}$ there exists a sequence $(B_n)_{n \in \mathbb{N}} \subset C^0(M,\mathrm{SL}(2,\mathbb{R})) \setminus \mathcal{UH}$ converging to $A$ in the $C^0$-topology such that each $B_n$ has vanishing Lyapunov exponents.
As a consequence, this dichotomy shows that a cocycle lying in $\partial \mathcal{UH}$ is a continuity point for the Lyapunov exponents if and only if its Lyapunov exponents vanish. This provides a full description of the continuity behavior at the boundary of the uniformly hyperbolic set.

Therefore, these considerations naturally motivate the study of coarser topologies. Considering the $C^{\alpha}$-topology, a well studied topology, Backes, Brown and Butler \cite{bbb} established that, when restricted to the set of cocycles satisfying the quasi-conformality condition, known as \textit{fiber-bunching}, over hyperbolic homeomorphism systems on compact metric space and with invariant ergodic probability measure with local product structure, the Lyapunov exponents vary continuously. Their result applies to cocycles admitting invariant holonomies, a property ensured by the fiber-bunching condition. 

Within this setting, the continuity properties become more flexible. In particular, cocycles lying in the boundary of the uniformly hyperbolic set may still be continuity points for Lyapunov exponents. Indeed, any cocycle in $\partial \mathcal{UH}$ satisfying the fiber-bunching condition is necessarily a point of continuity, in contrast with the $C^0$-topology scenario.

In this paper, we investigate new examples of cocycles that lie on the boundary of uniformly hyperbolic set. These examples illustrate the subtle transition between uniform and non-uniform hyperbolicity, providing insight into how Lyapunov exponents behave near the boundary of hyperbolic settings. In order to advance this question and motivated by the goal of understanding the behavior of cocycles in broader settings beyond the Hölder framework, we introduce intermediate topologies, finer than the 
$C^0$-topology, which provide a suitable setting for analyzing the continuity problem of Lyapunov exponents in this particular example. 

More precisely, we consider a particular cocycle $A \in \partial \mathcal{UH}$, whose definition is presented in Section \ref{sec3} and we consider the $C_{\delta\text{-}\log}$-topology for $\delta < 1$, defined in Section \ref{sec2}. Then, we prove that this cocycle is a discontinuity point for the Lyapunov exponents. 

\begin{teorema}\label{teo1}
For $\delta<1$, in the $C_{\delta-\log}$-topology, $A$ can be approximated by cocycles with vanishing Lyapunov exponents. In particular, it is a discontinuity point for the Lyapunov exponents in this topology.    
\end{teorema}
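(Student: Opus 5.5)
The plan is to follow the Bochi–Mañé mechanism, but to make it quantitative enough that the perturbations remain small in the $C_{\delta-\log}$-topology and not only in $C^0$. The relevant scales are as follows. On a cylinder of length $n$ the diameter is of order $2^{-n}$ (or $\theta^{n}$ for the metric used in Section \ref{sec2}). Its logarithm is therefore of order $n$. So a perturbation supported near a cylinder of depth $n$ with $C^0$-size $\varepsilon$ should have $C_{\delta-\log}$-size of order $\varepsilon\, n^{\delta}$, up to constants coming from the way the perturbation is interpolated to the complement. The whole proof reduces to performing the "mixing of Oseledets directions" with rotations of angle $\varepsilon_n\approx 1/n$, spread over orbit segments of length $\approx n$. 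Then $\varepsilon_n n^{\delta}\approx n^{\delta-1}\to 0$ precisely because $\delta<1$. This also explains why the Hölder case is different, in agreement with \cite{bbb}: there the cost would be $\varepsilon_n 2^{\alpha n}$, which is huge.

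\textbf{Step 1.} Use the structure of $A$ from Section \ref{sec3}. Since $A\in\partial\mathcal{UH}$, I expect that along the orbit of some periodic point (a fixed symbol, say the one giving $A_{\sigma 1}$), the cocycle fails uniform hyperbolicity in a subexponential way. The typical situation is a parabolic or identity-like matrix, or a matrix sending the unstable direction of one block onto the stable direction of another. In that situation the angle between the relevant directions evolves only polynomially along long blocks of repeated symbols. I would isolate this fact as a lemma: along a block of $n$ repeated symbols, composing each step with a rotation of angle $c/n$ (with $c$ fixed) turns a given direction by a definite amount, for instance by $\pi/2$. The rotation can thus exchange the Oseledets directions $E^u$ and $E^s$, while the product of the perturbed matrices along the block stays bounded independently of $n$.

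\textbf{Step 2.} Build the perturbation $B_n$. Let $Z_n$ be the set of points whose next $n$ symbols form such a block. Define $B_n=A\cdot R_{c/n}$ on the corresponding cylinders and $B_n=A$ elsewhere. Because $A$ is locally constant, $B_n$ is again locally constant on cylinders of depth $n$. Its $C_{\delta-\log}$-distance to $A$ is then at most $C\, n^{\delta-1}$, which I will verify directly from the definition of the norm in Section \ref{sec2}.

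\textbf{Step 3.} Show that $B_n$ has vanishing Lyapunov exponents, or, failing that, exponents below $\lambda(A)-\eta$ for a fixed $\eta>0$. To get exactly zero, I would tune the angle so that the perturbed block maps $E^u$ exactly onto $E^s$ and vice versa. The unordered pair $\{E^u,E^s\}$ is then a measurable $B_n$-invariant field of lines. On that field the cocycle alternately expands and contracts, and an ergodicity/symmetry argument for the Bernoulli measure forces $\lambda(B_n)=0$. If an exact swap is impossible, the fallback is Bochi's energy argument: each visit to $Z_n$ (which has positive frequency $\mu(Z_n)$) loses a fixed fraction of the growth. Iterating the construction at deeper scales, with a diagonal argument, then produces cocycles with zero exponents converging to $A$.

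The main obstacle is Step 1 together with the exact-swap part of Step 3. One must verify that along the non-hyperbolic part of $A$ the directions genuinely move only polynomially, so that angles of order $1/n$ suffice. One must also control how the Oseledets directions of the perturbed cocycle differ from those of $A$, since the perturbation changes them. I would first attempt an explicit computation with the specific matrices $A_{\sigma\eta}$, $A_{\sigma1}$ from Section \ref{sec3}, and use a continuity/intermediate value argument in the rotation angle to hit the exact swap. Discontinuity then follows at once: $\lambda(A)>0$ while $\lambda(B_n)=0$ and $B_n\to A$.
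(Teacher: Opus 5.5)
Your outline is the paper's first perturbation in substance: compose $A_{\sigma1}$ with $R_{\pi/(2n)}$ at $n$ consecutive steps inside a block of zeros, so that the $C_{\delta\text{-}\log}$ cost is of order $\frac{\pi}{2n}\,(n\log\rho^{-1})^{\delta}\to0$, which tends to zero exactly because $\delta<1$. As written, however, it stops short of a proof at the two places where the paper does its work. The obstacle you single out also disappears for this cocycle. For $A=A_{\sigma1}$ one has $A\equiv\mathrm{Id}$ on $[0;0]$ and $A$ diagonal on $[0;1]$. So $n$ rotations by $\pi/(2n)$ along a block of zeros compose to \emph{exactly} $R_{\pi/2}$, which swaps the horizontal and vertical lines. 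You therefore need no lemma on polynomial motion of directions, no intermediate-value argument in the angle, and no control of the perturbed Oseledets directions. The general inference from $A\in\partial\mathcal{UH}$ in your Step 1 is not justified and should simply be replaced by this computation.

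What does need care is where the rotations are placed. If $Z_n$ is the set of points whose next $n$ symbols are $0$, a run of $L\ge n$ zeros receives $L-n+1$ rotations. The total angle then varies from run to run, and no choice of angle gives an exact swap. The paper uses the non-self-overlapping word $0^n1$. For $Z_n=[0;0\cdots01]$, the sets $f^i(Z_n)$, $0\le i\le n-1$, are pairwise disjoint; they are the last $n$ positions of each run of at least $n$ zeros. Hence every excursion receives exactly $n$ rotations.

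The second missing piece is the vanishing of the exponent, which you only assert via an ``ergodicity/symmetry argument''. Your picture (``alternately expands and contracts'') is right, and the paper makes it quantitative on the induced system. The first-return cocycle on $Z_n$ is $\begin{pmatrix}0&-\sigma^{s(x)}\\ \sigma^{-s(x)}&0\end{pmatrix}$, with $s$ the number of $1$'s in the excursion. So the $2j$-th return is $\pm\mathrm{diag}(\sigma^{c_j},\sigma^{-c_j})$, where $c_j$ is the alternating sum of $s$ along successive returns. The argument then runs as follows:
\begin{enumerate}
\item Integrability of $s$ on $Z_n$, together with ergodicity of the second-return map $g=f^{\tau^{(2)}}$, gives $c_j/j\to0$ by Birkhoff. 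Ergodicity of $g$ holds here because the excursions between occurrences of $0^n1$ are i.i.d.
\item Kac gives $\tau^{(2j)}/j\to 2/\mu(Z_n)$.
\item Hence $\frac{1}{\tau^{(2j)}}\log\|B_n^{\tau^{(2j)}}(x)\|\to0$. Since the full limit exists almost everywhere, this forces $\lambda_+(B_n)=0$.
\end{enumerate}
The symmetry you invoke (which of the two lines is expanded) flips at every return, so it is invariant only under $g$. The ingredient is therefore ergodicity of $g$, not of $f$.

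Finally, your fallback would not prove the statement. An energy-type argument only lowers the exponent. A diagonal limit of cocycles with small exponents need not have zero exponent, because $\lambda_+$ is only upper semicontinuous, which gives the inequality in the wrong direction. At best the fallback yields discontinuity, not approximation by cocycles with vanishing exponents.
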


Although the above result is established for a specific cocycle, it exhibits a phenomenon analogous to that described by Bochi–Mañé's theorem. More precisely, there exists a sequence $(B_n)_{n \in \mathbb{N}} \subset C^0(M,\SL(2)) \setminus \mathcal{UH}$ such that $B_n \to A$ in the $C_{\delta-\log}$-topology, and each $B_n$ has vanishing Lyapunov exponents.

We note that the methods employed in the proof of Theorem \ref{teo1} only yield discontinuity of the Lyapunov exponents in the $C_{\delta-\log}$-topology, which is less regular than the Hölder topology. Nevertheless, examples constructed by Bocker and Viana \cite{bockerviana}, Butler \cite{butler}, and examples by Mamani and Saraiva \cite{mamanisaraiva} (see Section \ref{exemploa} for the definition), based on similar arguments, also establish discontinuity in the Hölder topology.

\section{Preliminaries}\label{sec2}
In this section, we introduce the definitions and notation required for the formulation of the main results. The theorem is stated for the full shift on the base, since its definition and properties play a crucial role in the construction of the proof.

Let $M = \{0,1\}^{\mathbb{Z}}$ be the space of bi-infinite sequences over an alphabet of two symbols. For any $\rho \in (0,1)$, we equip $M$ with the metric 
\begin{equation}\label{metric}  
d_{\rho}(x,y) = \rho^{N(x,y)} \quad \text{ where } \quad N(x,y) = \min\{|i| \geq 0 : x_i \neq y_i\}, 
\end{equation}
which generates the product topology on $M$. Since, for each $\rho \in (0,1)$, the resulting metrics are Hölder equivalent and therefore induce the same topology on $M$, we fix $\rho$ from now on.

Let $f \colon M \to M$ be the left shift map defined by $f\big((x_i)_{i \in \mathbb{Z}}\big) = (x_{i+1})_{i \in \mathbb{Z}}$. The map $f$ is a hyperbolic homeomorphism with $1/\rho$ the expansion rate of $f$, where $\rho$  is the constant appearing in the definition of the metric.

Given $p \in (0,1)$, consider the probability measure $\nu =p\delta_1+(1-p)\delta_0$ on $\{0,1\}$ and let $\mu_p = \nu^{\mathbb{Z}}$ be the corresponding Bernoulli product measure on $M$. This construction yields the Bernoulli shift system $(M, f, \mu_p)$.

\subsection{Linear Cocycles}
Consider a continuous map $A \colon M \to \mathrm{SL}(2, \mathbb{R})$ over the compact metric space $(M,d_{\rho})$. The linear cocycle defined by $A$ over the base dynamics $f:M \to M$ is the skew-product $F_A \colon M \times \mathbb{R}^2 \to M \times \mathbb{R}^2$ defined by $$F_A(x, v) = (f(x), A(x)v).$$ Since the base transformation $f$ is fixed, we identify the cocycle $F_A$ with its generator $A$ and refer to $A$ itself as a linear cocycle. 

The cocycle property $A^{n+m}(x)=A^n(f^m(x))A^m(x)$ for all $n,m\in \Z$ and $x \in M$ follows directly from the dynamical definition of the iterated products: 
\begin{equation}\label{lll}
    A^n(x) = 
\begin{cases}
A(f^{n-1}(x)) \cdots A(x) & \text{if } n > 0, \\
\text{Id} & \text{if } n = 0, \\
A(f^{-n}(x))^{-1} \cdots A(f^{-1}(x))^{-1} & \text{if } n < 0.
\end{cases}
\end{equation}

We use the following definition of the norm:
$$\|A^n(x)\|= \sup_{\|v\|_2=1} \|A^n(x)v\|_2,$$
where $\|\cdot\|_2$ denotes the usual norm in $\R^2$.

For every continuous function $A:M \to \SL(2)$, the Furstenberg-Kesten Theorem \cite{furstenberg} guarantees the existence of the following limits $\mu_p$-almost every $x \in M$, which are called the extremal Lyapunov exponents associated with the system $(f,A,\mu_p)$,
\[   \lambda_{+}(A,\mu_p,x) = \lim_{n\rightarrow \infty}\frac{1}{n}\log\|A^n(x)\| \quad \text{ and } \quad \lambda_{-}(A,\mu_p,x) = \lim_{n\rightarrow \infty}\frac{1}{n}\log\|A^n(x)^{-1}\|^{-1}. \]
 Since Lyapunov exponents are $f$-invariant functions and the measure $\mu_p$ is ergodic, they are constant $\mu_p$-almost everywhere. Thus, we simply denote them by $\lambda_+(A,\mu_p)$ and $\lambda_-(A,\mu_p)$. For $\SL(2)$-valued cocycles, the relation between the two Lyapunov exponents holds $\lambda_{+}(A, \mu_p)+\lambda_{-}(A, \mu_p)=0$.
 
 From Lemma 9.1 of \cite{viana}, we observe that $\lambda_+(A,\mu_p)$ and $\lambda_-(A,\mu_p)$ are, respectively, an upper semicontinuous and a lower semicontinuous function of $A$. Moreover, if $\lambda_{+}(A, \mu_p)$ is zero, the same holds for $\lambda_{-}(A, \mu_p)$ by the above relation, and therefore the cocycle $A$ is a continuity point.

\subsection{Topologies} We continue to consider the product space $(M, d_{\rho})$ and a continuous function $A:M \rightarrow \SL(2)$. Since the constant $\rho$ is fixed, we will denote $d$ instead of $d_{\rho}$. We introduce now several classes of regularity for such functions, starting from the usual Hölder continuity and moving gradually toward weaker forms of continuity that appear naturally in the study of linear cocycles with limited smoothness, see \cite{duartekleinsantos} for more details. For that, we only use the fact that $M$ is a metric compact space.

A function $A$ is $\alpha$-\textit{Hölder continuous}, for some $\alpha>0$, if it belongs to the space $C^{\alpha}(M, \SL(2))$, that is, if $\|A\|_{\alpha}< \infty$, where  
$$\|A\|_{\alpha}=\sup_{x\in M}\|A(x)\|+\sup_{x \neq y \in M} \frac{\|A(x)-A(y)\|}{d(x,y)^{\alpha}}.$$
This is the standard regularity condition; it can be interpreted in terms of the Hölder constant of $A$, which quantifies how much $A$ can vary between two points relative to a power of their distance. A smaller constant corresponds to a more regular $A$.

A weaker version of this condition can be defined by replacing the power of the distance with a logarithmic term.

    We say that $A$ is a \textit{weak Hölder continuous} function if $A \in C_{weak}(M, \SL(2))$, that is, if
    $$\|A\|_{weak}=\sup_{x\in M}\|A(x)\|+\sup_{x \neq y \in M} \left\{\|A(x)-A(y)\|\exp\left(\alpha \left(\log \frac{1}{d(x,y)}\right)^{\theta}\right)\right\}< \infty,$$
for some $\alpha, \theta \leq 1$.

We can note that, due to logarithmic term, this definition allows for slower rates of continuity decay as points approach each other.
Also, when $\theta=1$, the estimate above coincides with the classical Hölder continuity, then the weak Hölder condition truly generalizes the standard case.

An even more relaxed form of regularity is obtained by introducing iterated logarithmic terms.

    The function $A$ is said to be $(\gamma, \kappa)$\textit{-log-Hölder continuous} if $A \in C_{(\gamma,\kappa)}(M, \SL(2))$, that is, if 
    $$\|A\|_{(\gamma, \kappa)}=\sup_{x\in M}\|A(x)\|+\sup_{x \neq y \in M} \left\{\|A(x)-A(y)\|\exp\left(\kappa \left(\log\log \frac{1}{d(x,y)}\right)^{\gamma}\right)\right\}<\infty,$$
    where $\gamma, \kappa \geq 1$.

This class consists of functions whose regularity decays at a double-logarithmic rate, allowing for even slower convergence than in the weak Hölder case. When $\gamma=1=\kappa$, the definition reduces to the usual log-Hölder continuity, which we introduce next. Throughout, we will use the notation $1$-log-Hölder continuity to refer to the standard log-Hölder case, since we will work with broader regularity classes that generalize this type of continuity.

    Then, we say that $A$ is a 1-\textit{log-Hölder continuous} function if $A \in C_{1-\log}(M, \SL(2))$, i.e, if 
    \[\|A\|_{1-\log}=\sup_{x\in M}\|A(x)\|+\sup_{x \neq y \in M} \left\{\|A(x)-A(y)\| \left(\log \frac{1}{d(x,y)}\right)\right\}<\infty.\] 

   Finally, for $0<\delta<1$, we define the $\delta$\textit{-log-Hölder continuity} by requiring that $A \in C_{\delta-\log}(M, \SL(2))$, where 
    \[\|A\|_{\delta-\log}=\sup_{x\in M}\|A(x)\|+\sup_{x \neq y \in M} \left\{\|A(x)-A(y)\| \left(\log \frac{1}{d(x,y)}\right)^{\delta}\right\}<\infty.\]
This class provides a continuous transition between log-Hölder and merely continuous functions, reflecting very mild degree of regularity.

These definitions form a natural relation between the spaces describing decreasing levels of regularity.
Indeed, one can verify the following inclusions
 \begin{eqnarray}\label{inclusion}
        C^{\alpha} \subset C_{weak} \subset C_{(\gamma, \kappa)} \subset C_{1-\log} \subset C_{\delta-\log} \subset C^0,
    \end{eqnarray}
showing how each new space properly extends the previous one.
\subsection{Examples in each regularity class}\label{exemploa}
    For any constants $\sigma \geq\eta>1$, consider the cocycle associated with the following continuous function $A_{\sigma \eta}: M \to \SL(2)$ defined by 
    \begin{eqnarray*}
	A_{\sigma\eta}(x)= \left\{
	\begin{array}{ll}
	\begin{pmatrix}
	\eta^{-1} & 0 \\
	0 & \eta 
	\end{pmatrix} &\,\ \mbox{if $x_0=0$} \\
	\\
	\begin{pmatrix}
	\sigma & 0 \\
	0 & \sigma^{-1}
	\end{pmatrix} &\,\ \mbox{if $x_ 0=1$,}
	\end{array}\right.
	\end{eqnarray*}
    whose upper Lyapunov exponent is given by the number $\lambda_{+}(\ase, \mu_{p})=|(1-p)\log \eta - p \log \sigma|$.

    The cocycle $\ase$ is $\alpha$-Hölder continuous for every $\alpha>0$, since it depends only on a finite number of cylinders. Consequently, by the inclusion \eqref{inclusion}, this cocycle belongs to all the spaces defined above.
    
    For every choice of $\sigma$ and $\eta$ and, for all weights $p>0$ such that $p \neq \frac{\log \eta}{\log \sigma \eta}$, Bochi-Mañé's theorem, \cite{bochi} implies that the cocycle $\ase$ is a discontinuity point for the Lyapunov exponents with respect to $\mu_p$ in $C^0$-topology.  
    
    Moreover, without imposing any additional restriction on the parameters $\sigma$ and $\eta$, the cocycle $\ase$ is also a discontinuity point for the Lyapunov exponents in each of these intermediate topologies. The proof of this claim follows from the fact that the norms defining these topologies involve logarithmic terms, while one can be construct a family of cocycles that is exponentially close to the original cocycle in the $\alpha$-norm, as shown in \cite{mamanisaraiva}. Consequently, these cocycles, whose Lyapunov exponents vanish, can be made arbitrarily close to $\ase$ in each of those norms.

    In contrast with those topologies, the situation in the $\alpha$-Hölder topology is more subtle: discontinuity is not the only possible behavior. Indeed, depending on the parameters, there exist regions where the Lyapunov exponents vary continuously, such as the fiber-bunched region (see \cite{bbb}), as well as regions of discontinuity (see \cite{mamanisaraiva}).

    \section{Cocycle with the Identity}\label{sec3}

 
 In this section, we introduce the cocycle associated with a function taking values in diagonal matrices, one of which is the identity matrix. We recall some known results concerning this cocycle and discuss the structural restrictions that such a class of cocycle imposes.

Fix $\sigma>1$. We consider the cocycle associated with the function $\as1:M \rightarrow \SL(2)$ defined by 
\begin{eqnarray}\label{as1def}
	A_{\sigma1}(x)= \left\{
	\begin{array}{ll}
	\begin{pmatrix}
	1 & 0 \\
	0 & 1 
	\end{pmatrix} &\,\ \mbox{if $x_0=0$} \\
	\\
	\begin{pmatrix}
	\sigma & 0 \\
	0 & \sigma^{-1}
	\end{pmatrix} &\,\ \mbox{if $x_ 0=1$.}
	\end{array}\right.
	\end{eqnarray}

Observe that the cocycle $\as1$ is $\alpha$-Hölder continuous for every $\alpha>0$, since it is locally constant, that is, depends only on the first coordinate of the point. Consequently, by the inclusions in~\eqref{inclusion}, $\as1$ belongs to each of the regularity spaces defined in the previous section.

Moreover, the Lyapunov exponents associated with this cocycle are given by \[\lambda_{\pm}(\as1, \mu_{p})=\pm p\log\sigma,\] which are nonzero since $p$ is positive. 

Note that this cocycle is not uniformly hyperbolic, since it remains bounded along some orbit. Indeed, consider the fixed point $\tilde{x} \in M$ consisting entirely of the symbol $0$. Then for every $n \geq 1$, $$\|A^n_{\sigma 1}(\tilde{x})\|=1.$$
Hence, there is no uniform exponential growth of the norm of the iterates of the cocycle. See the hyperbolicity criterion in Chapter 2 of \cite{viana} for more details. 

Nevertheless, for every $\sigma>1$, the cocycle $\as1$ lies on the boundary of the set of uniformly hyperbolic cocycles. Indeed, to see this, consider the family of cocycles $\ase$ defined in Section \ref{exemploa}. When $\eta>1$ is sufficiently close to one, the cocycle $\ase$ is not uniformly hyperbolic, since for $p=\frac{\log \eta}{\log \sigma \eta}$, $\lambda_+(\ase, \mu_p)=0$, while remaining close to $\as1$ in the appropriate topology. On the other hand, when $\eta<1$ is sufficiently close to $1$, the same cocycle $\ase$ is still close to $\as1$, but in this case it is uniformly hyperbolic, with hyperbolic constant equal to $\eta^{-1}\sigma$. Hence, the cocycle $\as1$ sits precisely at the interface between continuity and discontinuity of the Lyapunov exponents, depending on the level of regularity. 

Since the cocycle $\as1$ is not uniformly hyperbolic and has non-zero Lyapunov exponents, by Bochi-Mañé's, $\as1$ is a discontinuity point for the Lyapunov exponents with respect to $\mu_{p}$ in the $C^0$- topology.

In constrast, when we consider the $C^{\alpha}$-topology, Backes, Brown and Butler's Theorem provides a sufficiently condition for continuity: $\as1$ is a continuity point if it satisfies the fiber-bunching condition, that is,  \[\left\|A_{\sigma 1}(x)\right\|\left\|A_{\sigma 1}(x)^{-1}\right\|< \rho^{-
\alpha} \,\ \,\ \forall  x \in M.\]
And this inequality holds precisely if and only if $\sigma^2<\rho^{-\alpha}$. Hence, $\as1$ is a $C^{\alpha}$-continuity point for the Lyapunov exponents with respect to $\mu_{p}$ whenever $\sigma^2<\rho^{-\alpha}$. 

Conversely, when $\sigma^2\geq \rho^{-\alpha}$, it is unknown whether the corresponding cocycle is a continuity or discontinuity point in $C^{\alpha}(M_, \SL(2))$. 

 Rather than restricting ourselves to the Hölder setting, we investigate intermediate topologies between $C^{\alpha}$ and $C^0$, which provide a more flexible framework for analyzing the behavior of the Lyapunov exponents for cocycles near $\as1$. By enlarging the class of admissible cocycles, we can examine how the discontinuity of the Lyapunov exponents behaves when weaker regularity assumptions are considered.
 
In particular, our goal is to understand how the behavior observed in the Hölder topology, such as the examples in Section \ref{exemploa}, where continuity holds only in this topology, changes when one considers weaker topologies. This allows us to understand whether the continuity phenomenon persists in these intermediate settings or whether the discontinuity observed in the $C^0$-topology already appears in topologies weaker than the Hölder one. 
    
    \section{Discontinuity Example in \texorpdfstring{$C_{\delta-\log}$}{}-topology}\label{identityres}
In this section, we prove Theorem~\ref{teo1}, which establishes that the cocycle $\as1$ is a discontinuity point for the Lyapunov exponents in the $C_{\delta-\log}$-topology, for all parameters $\sigma>1$ and $\delta<1$. As previously discussed, this topology is finer than the $C^{0}$-topology.

The proof follows the general strategy: we construct a perturbation of the original cocycle that interchanges the Oseledets subspaces. This property forces the Lyapunov exponents of the perturbed cocycle to vanish, yielding the desired discontinuity. In the present setting, however, the weaker regularity of the $C_{\delta-\log}$-topology allows for greater flexibility, and we are in fact able to construct two distinct perturbations with this property.
    
    \begin{teorema}
\textit{For any $\sigma>1$ and $\delta<1$, the cocycle $\as1$ defined in \eqref{as1def} can be approximated by cocycles with vanishing Lyapunov exponents in the $C_{\delta-\log}$-topology. In particular, $\as1$ is a discontinuity point for the Lyapunov exponents in this topology.}
    \end{teorema}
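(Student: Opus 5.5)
The plan is a Bochi–Mañé-type rotation trick, placed inside long blocks of zeros. On those blocks $\as1$ is the identity, so a very small rotation applied many times costs almost nothing in the $C_{\delta-\log}$-norm. Fix $N\ge 1$ and consider the cylinder
\[
Z_N=\{x\in M:\ x_{-N}=1,\ x_{-N+1}=\dots=x_{N+1}=0,\ x_{N+2}=1\},
\]
so that a point of $Z_N$ sits at the start of a maximal run of exactly $2N+1$ zeros (the precise length only needs to be fixed and of order $N$). Let $W_N=\bigcup_{j=0}^{2N} f^{j}(Z_N)$ be the set of points lying inside such a run. Let $R_\theta$ denote rotation by $\theta$, and define
\[
B_N(x)=\begin{cases} R_{\pi/(2(2N+1))} & x\in W_N,\\ \as1(x) & \text{otherwise.}\end{cases}
\]
Then $B_N$ is locally constant, and along each marked run it multiplies to $R_{\pi/2}$, which interchanges the horizontal and vertical directions (the Oseledets directions of $\as1$).

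\textbf{Step 1: closeness in $C_{\delta-\log}$.}
\begin{itemize}
\item $B_N-\as1$ vanishes off $W_N$, and on $W_N$ it equals $R_{\theta_N}-\mathrm{Id}$ with $\theta_N=\pi/(2(2N+1))$. Hence $\sup_x\|B_N(x)-\as1(x)\|\le C/N$.
\item Membership in $W_N$ is determined by the coordinates $x_i$ with $|i|\le 3N+3$. So if $d(x,y)<\rho^{3N+3}$, then $(B_N-\as1)(x)=(B_N-\as1)(y)$.
\item Otherwise $d(x,y)\ge\rho^{3N+3}$, so $\bigl(\log\frac{1}{d(x,y)}\bigr)^{\delta}\le \bigl((3N+3)\log\frac1\rho\bigr)^{\delta}$.
\item Together these give $\|B_N-\as1\|_{\delta-\log}\le C'N^{\delta-1}$, which tends to $0$ as $N\to\infty$ exactly because $\delta<1$.
\end{itemize}
This step is routine and is where the hypothesis $\delta<1$ enters.

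\textbf{Step 2: $\lambda_+(B_N,\mu_p)=0$.} I will pass to the induced (first-return) cocycle on $Z_N$, which has positive $\mu_p$-measure. Return times are integrable by Kac, so $\lambda_+(B_N)=\mu_p(Z_N)\,\lambda_+(\widehat B_N)$ by the standard induction formula. Between two consecutive marked runs the cocycle is a product of identities (on unmarked zeros) and $\mathrm{diag}(\sigma,\sigma^{-1})$ (on ones). The induced matrix is therefore
\[
\widehat B_N(x)=D_{m(x)}\,R_{\pi/2},\qquad D_m=\mathrm{diag}(\sigma^{m},\sigma^{-m}),
\]
where $m(x)$ is the number of ones before the next marked run. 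By the Bernoulli property, the successive values $m_1,m_2,\dots$ along the induced orbit are i.i.d. under the normalized measure. Using $R_{\pi/2}D_mR_{\pi/2}^{-1}=D_{-m}$ and $R_{\pi/2}^2=-\mathrm{Id}$, a pair of steps gives
\[
D_{m_2}R_{\pi/2}D_{m_1}R_{\pi/2}=-D_{m_2-m_1}.
\]
Consequently the $2k$-th induced iterate is $\pm D_{S_k}$ with $S_k=\sum_{i\le k}(m_{2i}-m_{2i-1})$. This is a sum of i.i.d. mean-zero increments (integrable, since $m$ has exponential tails), so $S_k/k\to0$ almost surely by the strong law of large numbers. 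Hence $\frac{1}{2k}\log\|\widehat B_N^{2k}\|\to0$, so $\lambda_+(\widehat B_N)=0$ and therefore $\lambda_+(B_N,\mu_p)=0\ne p\log\sigma$.

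\textbf{Main obstacle.} I expect the main care to be needed in Step 2, in making the induction rigorous:
\begin{itemize}
\item checking that distinct marked runs do not overlap, which is guaranteed by the bounding ones at positions $-N$ and $N+2$;
\item checking that the $m_i$ are genuinely i.i.d. for the induced Bernoulli system;
\item justifying the formula $\lambda(B_N)=\mu_p(Z_N)\lambda(\widehat B_N)$, which follows from Furstenberg–Kesten applied along return times.
\end{itemize}
Finally, since $B_N\to\as1$ in $C_{\delta-\log}$ while $\lambda_\pm(B_N,\mu_p)=0$ and $\lambda_\pm(\as1,\mu_p)=\pm p\log\sigma\neq0$, $\as1$ is a discontinuity point of the Lyapunov exponents in this topology.
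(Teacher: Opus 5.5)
Your proposal is correct and is essentially the paper's first perturbation: rotations by angles summing to $\pi/2$ along a marked block of zeros (where $A_{\sigma 1}=\mathrm{Id}$), the $N^{\delta-1}$ bound for the $C_{\delta-\log}$-distance, induction on the marking cylinder, and the observation that even induced iterates are $\pm\mathrm{diag}(\sigma^{c_j},\sigma^{-c_j})$ with $c_j/j\to0$. The paper obtains $c_j/j\to 0$ from ergodicity of the second-return map instead of your i.i.d./strong-law argument, and its cylinder $[0;0\cdots01]$ does without the left bounding $1$. One indexing slip needs fixing: with your coordinates, $x_0$ sits in the middle of the run rather than at its start, so for $N\ge2$ the set $f^{N+2}(Z_N)\subset W_N$ consists of points with $x_0=1$, where $R_{\theta_N}$ is far from $\mathrm{diag}(\sigma,\sigma^{-1})$. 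Taking $Z_N=\{x_{-1}=1,\ x_0=\dots=x_{2N}=0,\ x_{2N+1}=1\}$ repairs this, and then membership in $W_N$ depends only on the $x_i$ with $|i|\le 2N+1$.
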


\begin{proof} First, denote $V_x=\R(1,0)$ and $H_x=\R(0,1)$, respectively, the vertical and horizontal line bundles. Since the cocycle $\as1$ is defined by diagonal matrices, both bundles are invariant under its action. Moreover, these subspaces coincide almost everywhere with the Oseledets subspaces associated with the cocycle.

As we mentioned earlier, our goal is to construct two perturbations of $\as1$, arbitrarily close to it in the $C_{\delta-\log}$-topology, that interchange the Oseledets subspaces. The existence of such perturbations implies that the Lyapunov exponents of the perturbed cocycles vanish.

The general idea is as follows. In the first perturbation, we modify the original cocycle by introducing a sequence of small rotations along the orbit of points contained in a suitably chosen cylinder. These rotations will progressively alter the directions of the invariant  subspaces until the vertical and horizontal ones are exchanged. 

The second perturbation is based on a different geometric mechanism. We begin by applying a small shear that slightly tilts the vector $e_1$ from the horizontal direction. Then, along the orbit, we compose the dynamics with hyperbolic matrices that contract the horizontal direction while expanding the vertical one. After a sufficient number of iterations, we apply a small rotation to the image of $e_1$ aligning it with the vertical axis. Using the same rotation, we then move the vector $e_2$ from the vertical direction to one forming a small angle with the vertical axis. The action of the hyperbolic matrix of the cocycle then amplifies this change, bringing the vector closer to the horizontal direction. Finally, a second shear completes the interchange between the two invariant directions.
\begin{itemize}
    \item \textit{First Perturbation}:\end{itemize} 
    
    Let $k \in \N$ and consider the cylinder $Z_k=[0;0\cdots 01]$ where the symbol $0$ appears k times in $Z_k$. By construction, the colection of sets $\{f^i(Z_k)\}_{i=0}^{k-1}$ are pairwise disjoint, that is, $f^{i}(Z_k)\cap f^j(Z_k)=\emptyset$ for $0\leq i<j \leq k-1$.
   
    We now define a perturbation $B_k: M \rightarrow \SL(2)$ by
\begin{eqnarray}\label{firstp}
B_k(x)= \left\{
\begin{array}{ll}
 \as1(x) R_{\theta_k}\,\ &\mbox{if $x \in \bigcup_{i=1}^{k-1}f^{i}(Z_k)$} \\
\\
\as1(x)\,\ &\mbox{otherwise},
\end{array}\right.
\end{eqnarray}
where $\theta_k=\frac{\pi}{2k}$ and $R_{\theta_k}$ is the rotation matrix, $R_{\theta_k}=\begin{pmatrix}
    \cos(\theta_k) & -\sin(\theta_k)\\
    \sin(\theta_k) & \cos(\theta_k)
\end{pmatrix}$.     

Note that $B_k \in C_{\delta-\log}$, $ \forall k\geq 1$, since $B_k$ is constant on cylinders of diameter at least $\rho^{k}$.

Furthermore, observe that if $x \in Z_k$, then \[B_k^k(x)=R_{k\theta_k}=R_{\frac{\pi}{2}}.\]
In other words, for every $x \in Z_k$, the $k$-th iterate of the cocycle $B_k$ act as a rotation by an angle of $\frac{\pi}{2}$.

Therefore,
\begin{eqnarray*}
  B_k^k(x)H_x=V_{f^k(x)} \,\ \mbox{and} \,\ B_k^k(x)V_x=H_{f^k(x)} \,\ \,\  \forall x\in Z_k.  
\end{eqnarray*}
In particular, the perturbation satisfies the exchange property for every $k\in \Z$. 

We now proceed to show that $B_k$ converges to $\as1$ in this topology. More precisely, we prove that for every $\varepsilon >0$ there is $k_0>0$ such that for all $k \geq k_0$, $$\|B_k - \as1\|_{\delta-\log}< \varepsilon.$$

To this end, fix $\varepsilon>0$ and recall that $$\|B_k-\as1\|_{\delta-\log}=\|B_k-\as1\|_0+\sup_{x \neq y \in M}\left\{\|B_k(x)-\as1(x)-B_k(y)+\as1(y)\|\left(\log\frac{1}{d(x,y)}\right)^{\delta}\right\}.$$ 

We first observe that the first term of the above sum is bounded by $\sigma \frac{\pi}{2k}$, which clearly decays to zero as $k\to \infty$.

To handle the second term, we define, for all $x,y \in M$, $$\hat{T}_k(x,y)=\|B_k(x)-\as1(x)-B_k(y)+\as1(y)\|\left(\log\frac{1}{d(x,y)}\right)^{\delta},$$ and we analyze $\hat{T}_k(x,y)$ by considering the possible relative positions of $x$ and $y$. 

If $x$ and $y$ belong to different cylinders $[0;a]$ with $a\in \{0,1\}$, then their distance satisfies $d(x, y)=1$. Consequently, the logarithmic tem vanishes and we obtain $\hat{T}_k(x,y)=0$.

On the other hand, if $x$ and $y$ are in the same cylinder, $[0;0]$ or $[0;1]$, we have $\as1(x)=\as1(y)$. In this situation, two subcases may occur.

If one of the points belongs to $\bigcup_{i=1}^{k-1}Z_k$ and the other does not, we have the estimates $d(x,y)^{-1}\leq \rho^{-k}$ and $\|B_k(x)-B_k(y)\|=\frac{\pi}{2k}$. Hence,
\begin{eqnarray} \label{thereason}
\hat{T}_k(x,y) \leq \frac{\pi}{2k}k^{\delta}(\log \rho^{-1})^{\delta} \xrightarrow[k \to\infty]{} 0 
\end{eqnarray}
since $\delta<1$. 

In all remaining cases, the matrices $B_k$ and $\as1$ coincide on both points, and thus $\hat{T}_k(x,y)=0$.  

Therefore, for sufficiently large $k$, we conclude that $\|B_k-\as1\|_{\delta-\log}<\varepsilon$. 

It is worth emphasizing that the assumption $\delta<1$ plays a crucial role here: it ensures that term $\frac{\pi}{2k}k^{\delta}(\log \rho^{-1})^{\delta}$ indeed tends to zero, which in turn guarantees convergence in the $\delta$-logarithmic norm. 

Finally, we compute the Lyapunov exponents of $B_k$. For this purpose, we restrict our attention to a subsystem determined by the cylinder $Z_k$. The motivation for this restriction will become clear below. 

Define $\tau_{Z_k}$ as the first return time to $Z_k$ under $f$. That is, for every $x \in Z_k$,
\[ \tau_{Z_k}(x) = \inf\{m \geq 1 : f^m(x) \in Z_k\}.   \]
Let $\pi \colon M \to \{0,1\}$ be the projection onto the $0$-th coordinate. For every $m \geq 1$ and every $x \in M$, the number of occurrences of the symbol $1$ in the first $m$ coordinates of $x$ is given by
\[  S_m(x) = \sum_{i=0}^{m-1} \pi \circ f^{i}(x).  \]
Consequently, for every $x \in Z_k$, we can define
\begin{equation}\label{stau} S_{\tau_{Z_k}}(x) = \sum_{i=0}^{\tau_{Z_k}(x)-1} \pi \circ f^{i}(x).  
\end{equation}

Thus, recalling Definition \ref{lll}, we obtain an induced cocycle $B^{\tau_{Z_k}}_k$ defined for $x \in Z_k$ by the product
\[ B^{\tau_{Z_k}}_k(x) = B_k^{\tau_{Z_k}(x)}(x) := B_k(f^{\tau_{Z_k}(x)-1}(x)) \cdots B_k(f(x))B_k(x).  \]

And, using the definition of the perturbation \eqref{firstp}, we now calculate the induced cocycle $B_k^{\tau_{Z_k}}$:
\begin{align}\label{inducedidentity}
B_k^{\tau_{Z_k}}(x)
&= B_k(f^{\tau_{Z_k}-1}(x))\cdots B_k(f^{k}(x))B_k^k(x) \notag \\
&= \begin{pmatrix}
0 & -\sigma^{S_{\tau_{Z_k}}(x)+1}\\
\sigma^{-S_{\tau_{Z_k}}(x)-1} & 0
\end{pmatrix}.
\end{align}

Furthermore, for every $m \geq 1$, and $x \in Z_k$, we can count the total number of iterations under $f$ required for $x$ to return to $Z_k$ exactly $m$ times and we denote this number by $\tau_{Z_k}^{(m)}$, that is, the $m$-th return time to $Z_k$ under $f$. Specifically, for every $x \in Z_k$,
\begin{equation}\label{mreturntimes}\tau_{Z_k}^{(m)}(x) = \sum_{i=0}^{m-1} \tau_{Z_k}\big(f^{\tau_{Z_k}^{(i)}}(x)\big). \end{equation}
Here, we use the convention that $\tau_{Z_k}^{(0)}(x) = 0$. 

Let $\mu_{Z_k} = \frac{\mu|_{Z_k}}{\mu(Z_k)}$ be the normalized restriction of $\mu$ to $Z_k$, which is invariant under the first return map $f^{\tau_{Z_k}}$. The measure subsystem $(Z_k, f^{\tau_{Z_k}}, \mu_{Z_k})$ is well known to be ergodic, \cite{friedman}. In this context, its relevance lies in the following relation between the Lyapunov exponents:
\begin{equation}\label{relation}
\lambda_{\pm}(B^{\tau_{Z_k}}_k, \mu_{Z_k}) = \frac{\lambda_{\pm}(B_k, \mu)}{\mu(Z_k)}.
\end{equation}
Thus, the problem reduces to showing the vanishing of Lyapunov exponents for the induced system, i.e., proving that $\lambda_{\pm}(B^{\tau_{Z_k}}_k, \mu_{Z_k}) = 0$. And we proceed now to prove it.

Repeating the same procedure for the second return time, we obtain:
\begin{eqnarray}\label{secondinduced}
    B_k^{\tau_{Z_k}^{(2)}}(x)=\begin{pmatrix}
        -\sigma^{S_{\tau_{Z_k}}(f^{\tau_{Z_k}}(x))-S_{\tau_{Z_k}}(x)}&0\\
        0 & -\sigma^{-S_{\tau_{Z_k}}(f^{\tau_{Z_k}}(x))+S_{\tau_{Z_k}}(x)}
    \end{pmatrix}.
\end{eqnarray}
Moreover, for iterates corresponding to even return times, an induction argument on $j\geq 1$ yields
\begin{eqnarray}\label{eveninduced}
    B_k^{\tau_{Z_k}^{(2j)}}(x)=\begin{pmatrix}
        (-1)^{j}\sigma^{c_j(x)}&0\\
        0 & (-1)^j\sigma^{-c_j(x)}
    \end{pmatrix},
\end{eqnarray}
where the function $c_j$ is defined as 
\begin{eqnarray}\label{cjdef}
    c_j(x) = \sum_{i=1}^{j} \left(S_{\tau_{Z_k}}(f^{\tau^{(2i-1)}_{Z_k}}(x)) - S_{\tau_{Z_k}}(f^{\tau^{(2i-2)}_{Z_k}}(x))\right).
    \end{eqnarray}
 
 Then, denoting $\tau_{Z_k}^{(2j)}=:m_j$, note that at the even return times the induced cocycle $B_k^{m_j}(x)$ takes a diagonal form reflecting expansion in one direction and contraction in the transverse direction, or vice versa, depending on the point $x$. 

The Furstenberg-Kesten Theorem, \cite{furstenberg} implies that for $\mu_{Z_k}$-almost every $x \in Z_k$, the following limit exists and it is non-negative since $B^{\tau_{Z_k}}_k$ is an $\mathrm{SL}(2,\mathbb{R})$-valued cocycle,
\begin{equation} \label{limbk} \lambda_+(B^{\tau_{Z_k}}_k, \mu_{Z_k}) = \lim_{m \to \infty} \frac{1}{m}\log \left\|B^{\tau^{(m)}_{Z_k}}_k(x)\right\| \geq 0. \end{equation}

Also, by \eqref{eveninduced}, we obtain the estimate
\begin{eqnarray*}
    \lim_{j \to \infty} \frac{1}{m_j}\log \|B_k^{m_j}(x)\|
    \leq
    \left(\lim_{j \to \infty} \frac{|c_j(x)|}{m_j}\right)\log \sigma.
\end{eqnarray*}

To complete the proof, it remains to establish the key estimate
\begin{eqnarray}\label{ineqkey}
\lim_{j\to\infty} \frac{1}{m_j}\log \left\|B^{m_j}_k(x)\right\| \leq 0.
\end{eqnarray}

For this purpose, we show that
\begin{equation}\label{cjj}
\left(\lim_{j \to \infty} \frac{|c_j(x)|}{m_j}\right)\log \sigma = 0.
\end{equation}

In order to establish inequality \eqref{ineqkey}, we require the following lemma, whose proof is analogous to that of Lemma 3.1 of~\cite{mamanisaraiva}.

\begin{lema}\label{into}
    The function $S_{\tau_{Z_k}}:Z_k\to \R$ defined in Equation \eqref{stau} belongs to $L^1(\mu_{Z_k})$. 
\end{lema}

The next result is a consequence of the previous lemma and the definition of the function $c_j$ given in \eqref{cjdef}.
\begin{lema}
Let $m_j (x)= \tau_{Z_k}^{(2j)}(x)$ be the $2j$-th return time of $x$ to $Z_k$. Then, for $\mu_{Z_k}$-almost every $x\in Z_k$
\[  \limsup_{j \to \infty} \frac{1}{m_j}\log \|B^{m_j}_k(x)\| \leq 0. \]
\end{lema}
\begin{proof}
We are ready to show that Equation \eqref{cjj} holds, from which the result will follow. Let $g = f^{\tau_{Z_k}^{(2)}}$ denote the second return time map. From Equation \eqref{cjdef}, we obtain
\begin{equation}\label{abirkhoff}
    \lim_{j \to \infty} \frac{1}{m_j}|c_j(x)| = \lim_{j \to \infty} \frac{j}{m_j} \cdot \lim_{j \to \infty} \frac{1}{j} \left| \sum_{i=0}^{j-1} S_{\tau_{Z_k}}(g^i(f^{\tau_{Z_k}}(x))) - \sum_{i=0}^{j-1} S_{\tau_{Z_k}}(g^i(x)) \right|.
\end{equation}
By Birkhoff's Ergodic Theorem and Kac's Theorem, we have
\[  \lim_{j \to \infty} \frac{m_j}{j} = \lim_{j \to \infty} \frac{1}{j} \sum_{i=0}^{2j} \tau_{Z_k}(f^{\tau_{Z_k}^{(i)}}(x)) = 2\int_{Z_k} \tau_{Z_k} \, d\mu_{Z_k} = \frac{2}{\mu(Z_k)}.   \]
Since $f^{\tau_{Z_k}}$ is ergodic and Bernoulli, the subsystem $(g, \mu_{Z_k})$ is also ergodic, see \cite{friedman}. By Lemma \ref{into}, $S_{\tau_{Z_k}} \in L^1(\mu_{Z_k})$, and applying Birkhoff's Ergodic Theorem to \eqref{abirkhoff} yields
\[  \lim_{j \to \infty} \frac{1}{j} \sum_{i=0}^{j-1} S_{\tau_{Z_k}}(g^i(f^{\tau_{Z_k}}(x))) = \int_{Z_k} S_{\tau_{Z_k}} \, d\mu_{Z_k} = \lim_{j \to \infty} \frac{1}{j} \sum_{i=0}^{j-1} S_{\tau_{Z_k}}(g^i(x)).  \]
Combining these results, we conclude that
\begin{eqnarray}\label{lima}
\lim_{j \to \infty} \frac{1}{m_j}|c_j(x)| = \frac{\mu(Z_k)}{2} \cdot \left| \int_{Z_k} S_{\tau_{Z_k}} \, d\mu_{Z_k} - \int_{Z_k} S_{\tau_{Z_k}} \, d\mu_{Z_k} \right| = 0. 
\end{eqnarray}
\end{proof}

Since the limit in \eqref{limbk} exists, and $\{m_j\}_{j \in \mathbb{N}} = \{\tau^{(2j)}_{Z_k}\}_{j \in \mathbb{N}}$ is a subsequence of return times $\{\tau^{(m)}_{Z_k}\}_{m \in \mathbb{N}}$, the limit along this subsequence must satisfy $\lim_{j \to \infty} \frac{1}{m_j}\log \|B^{m_j}_k(x)\| = 0$. Consequently, the full limit must agree, that is,  $$\lambda_+(B^{\tau_{Z_k}}_k, \mu_{Z_k}) = 0.$$ This establishes the vanishing of the Lyapunov exponents.

\begin{itemize}
\item \textit{Second Perturbation}:
\end{itemize}

Now, let $k \in \N$ and consider a different cylinder, $W_k=[0;0\cdots01\cdots 1]$ where the number 0 appears $k+1$ times and the number 1 appears $k$ times. In this construction, we intentionally choose the symbol $1$ to appear more frequently than in the previous cylinder $Z_k$, since the hyperbolic matrix will be used to produce the interchange of subspaces.

As before, the first $2k$ iterates of the cylinder are pairwise disjoint, that is, $f^{i}(W_k) \cap f^j(W_k)=\emptyset$ for $0 \leq i<j\leq 2k$. 

Fix $\beta>0$ such that $1>\beta>\delta$. And, finally, we define the cocycle associated to the function $L_k:M \rightarrow \SL(2)$ by 
\begin{eqnarray}\label{second}
L_k(x)= \left\{
\begin{array}{ll}
\as1(x)\begin{pmatrix}
1 & 0 \\
k^{-\beta} & 1
\end{pmatrix} &\,\ \mbox{if $x \in W_k$} \\
\\
\as1(x)\begin{pmatrix}
(1+k^{-\beta})^{-1} & 0 \\
0 & (1+k^{-\beta}) 
\end{pmatrix} &\,\ \mbox{if $x \in \bigcup_{i=1}^{k}f^i(W_k)$}\\
\\
\as1(x)R_{\theta_k}  &\,\ \mbox{if $x \in f^{k+1}(W_k)$} \\
\\
\as1(x)\begin{pmatrix}
1 & 0 \\
\tilde{\gamma}(k) & 1 
\end{pmatrix} &\,\ \mbox{if $x \in f^{2k}(W_k)$}\\
\\
\as1(x)    &\,\ \mbox{otherwise,}
\end{array}\right.
\end{eqnarray}
where $\theta_k$ is such that $$\tan(\theta_k)=\frac{1}{k^{-\beta}(1+k^{-\beta})^{2k}},$$ and 
\begin{equation}\label{gammak}\tilde{\gamma}(k)=\frac{k^{-\beta}(1+k^{-\beta})^{2k}}{\sigma^{2k}}=\frac{1}{\tan({\theta_k})\sigma^{2k}}.
\end{equation}

Observe that, due to the disjointness of the iterates of $W_k$, the function $L_k$ is well defined for all $k \geq 1$. In addition, $L_k$ is $\delta-\log$ Hölder continuous for every $k\geq 1$, since it remains constant on cylinders of diameter at least $\rho^{2k-1}$.

For any $x \in W_k$, by the definition of $L_k$, we have
\begin{eqnarray*}
    L_{k}^{2k+1}(x) &=& L_k(f^{2k}(x))\cdots L_k(x)\\
    &=& \begin{pmatrix}
0 & -\sin(\theta_k)\sigma^{k+1}(1+k^{-\beta})^k \\
\sigma^{-k-1}\frac{(1+k^{-\beta})^{-k}}{\sin(\theta_k)} & 0 
\end{pmatrix}.
\end{eqnarray*}

Hence, for every $x \in W_k$, 
\begin{eqnarray*}
  L_k^{2k+1}(x)H_x=V_{f^{2k+1}(x)} \,\ \mbox{and} \,\ L_k^{2k+1}(x)V_x=H_{f^{2k+1}(x)}. 
\end{eqnarray*}

Once again, the constructed cocycle exhibits the exchange property for every $k \in \Z$. 

We show now that, for $k$ sufficiently large, $L_k$ is close to $\as1$ in the $\delta-\log$ norm. As in the case of the first perturbation, our goal is to prove that for each $\varepsilon>0$ there is $k_0>0$ such that $\|L_k-\as1\|_{\delta-\log}<\varepsilon$ for all $k \geq k_0$.

For this purpose, we aim to estimate
\begin{equation}\label{normaLk}
\begin{array}{rcl}
    \|L_k-\as1\|_{\delta-\log}&=&\|L_k-\as1\|_0\\
    &+&\sup_{x \neq y \in M}\left\{\|(L_k-\as1)(x)-(L_k+\as1)(y)\|\left(\log\frac{1}{d(x,y)}\right)^{\delta}\right\}.
    \end{array}
    \end{equation}

    We begin by analyzing the first term of the sum, $\|L_k-\as1\|_0$. 
    Recall that $\sigma>1$, $\tan(\theta_k)=\frac{k^{\beta}}{(1+k^{-\beta})^{2k}}$ and $\tilde{\gamma}(k)=\frac{(1+k^{-\beta})^{2k}}{k^{\beta}\sigma^{2k}}$. From these definitions, we have
\begin{eqnarray}\label{gamma}
    \lim_{k\to \infty}\tilde{\gamma}(k)=\lim_{k\to \infty}\frac{1}{k^{\beta}}\left(\frac{1+k^{-\beta}}{\sigma}\right)^{2k}=0,
    \end{eqnarray}
    \begin{eqnarray}\label{theta}
    \lim_{k\to\infty} \tan(\theta_k)=\lim_{k\to \infty}\frac{k^{\beta}}{(1+k^{-\beta})^{2k}}=0.
\end{eqnarray}

Since $\|L_k-\as1\|_0 \leq \sigma \mbox{max}\{k^{-\beta}\mbox{,} \,\ \tilde{\gamma}(k)\mbox{,} \,\ \theta_k\}$, we conclude from (\ref{gamma}) and (\ref{theta}) that the uniform distance between $L_k$ and $\as1$ tends to zero. In particular, for every $\varepsilon>0$ there exists $\tilde{k}>0$ such that $$\|L_k-\as1\|_0<\varepsilon  \,\ \mbox{ for all  }k\geq \tilde{k}.$$

In order to conclude the proof, we now estimate the second term in the $\delta-\log$ norm of \eqref{normaLk}. Once again, we analyze all possible configurations of the points $x$ and $y$ in $M$. 

Let us denote $$\tilde{T}_k(x,y)=\|L_k(x)-\as1(x)-L_k(y)+\as1(y)\|\left(\log\frac{1}{d(x,y)}\right)^{\delta}.$$

If $x$ and $y$ belong to different cylinders, then $d(x,y)=1$ and, consequently, the logarithmic factor vanishes. In this case, we immediately have $\tilde{T}_k(x,y)=0$.

If, on the other hand, $x$ and $y$ belong to the same cylinder, then $\as1(x)=\as1(y)$, hence only the variation of $L_k$ contributes to the estimate of $\tilde{T}_k(x,y)$. We now analyze this variation by considering, separately, both cases about $x$ and $y$:
\begin{itemize}
\item Case 1: $x, y \in [0;0]$

In this case, since both points share the same initial symbol $0$, we will examine the possible positions of $x$ and $y$ relative to the sets $W_k$ and its iterates $f^i(W_k)$ with $0<i\leq k$.
    \begin{enumerate}
        \item $x\in W_k$ and $y \notin \bigcup_{i=0}^{k}f^{i}(W_k)$.
  
  Notice that their distance satisfies $d(x,y)^{-1}\leq \rho^{-2k-1}$ and by the construction of $L_k$, the difference between its values at these points is small, namely $\|L_k(x)-L_k(y)\|\leq k^{-\beta}$. Combining these two estimates yields
    $$\tilde{T}_k(x,y)\leq \frac{(2k+1)^{\delta}(\log\rho^{-1})^{\delta}}{k^{\beta}}=(\log\rho^{-1})^{\delta}\left(\frac{2k+1}{k^{\frac{\beta}{\delta}}}\right)^{\delta}.$$
    Since $\frac{\beta}{\delta}>1$, the exponent of $k$ in the denominator dominates, and it follows that $$\lim_{k \to \infty}\tilde{T}_k(x,y)=0.$$
    \item $x \in \bigcup_{i=1}^{k}f^{i}(W_k)$ and $y \notin \bigcup_{i=0}^{k}f^{i}(W_k)$.
    
In this configuration, $x$ belong to one of the first $k$ forward images of $W_k$, while $y$ remains outside this union. The same reasoning applies, we have $d(x,y)^{-1}\leq \rho^{-2k}$ and $\|L_k(x)-L_k(y)\|\leq k^{-\beta}$. Hence, 
$$\tilde{T}_k(x,y)\leq \frac{k^{\delta}(2\log\rho^{-1{}})^{\delta}}{k^{\beta}}.$$
Since $\beta>\delta$, the right-hand side tends to zero, then
$$\lim_{k\to \infty}\tilde{T}_k(x,y)=0.$$

    \item $x\in W_k$ and $y \in \bigcup_{i=1}^{k}f^{i}(W_k)$.
    
Here, both points belong to the orbit of $W_k$, but under different iterates. From the size of the cylinder $W_k$, we have $d(x,y)^{-1}\leq \rho^{-k}$ and by the definition of the perturbation, it follows that $\|L_k(x)-L_k(y)\|\leq \frac{2}{k^{\beta}}$. Therefore,
$$\tilde{T}_k(x,y)\leq 2(\log\rho^{-1})^{\delta}\frac{k^{\delta}}{k^{\beta}}.$$

Using again $\beta>\delta$, the limit of this expression as $k\to \infty$ is zero: 
    $$\lim_{k\to \infty}\tilde{T}_k(x,y)\leq \lim_{k\to\infty}2(\log\rho^{-1})^{\delta}\frac{k^{\delta}}{k^{\beta}}=0.$$
    \item All other configurations of $x, y\in [0;0]$
    
    For any other configuration within this cylinder, the function $L_k$ takes the same value at $x$ and $y$, hence $\tilde{T}_k(x,y)=0$.
    \end{enumerate}
\end{itemize}

\begin{itemize} 
\item Case 2: $x, y \in [0;1]$. 

We now consider the situation in which both points $x$ and $y$ begin with the symbol $1$. As in the previous case, we examine how the function $L_k$ varies on different subsets of the space and estimate $\tilde{T}_k(x,y)$ accordingly.

If $x$ and $y$ belong to the same image $f^{i}(W_k)$ for some $i\in \{k+1,\cdots,2k\}$, then $L_k(x)=L_k(y)$ and consequently $\tilde{T}_k(x,y)=0$. The same conclusion holds if neither $x$ nor $y$ belong to $f^{k+1}(W_k)\cup f^{2k}(W_k)$, since $L_k$ takes constant values on the complement of these sets.

Hence, we only need to analyze the cases in which one (or both) of the points lies in $f^{k+1}(W_k)$ or $f^{2k}(W_k)$. Then, the remaining possibilities are the following:
\begin{enumerate}
    \item $x \in f^{k+1}(W_k)$ and $y \notin f^{k+1}(W_k) \cup f^{2k}(W_k)$.
    
      Here, we have $d(x,y)^{-1}\leq \rho^{-k-1}$ and the variation of $L_k$ in this region satisfies $\|L_k(x)-L_k(y)\|\leq \sigma \frac{k^{\beta}}{(1+k^{-\beta})^{2k}}$. Combining these two estimates, we obtain 
      $$\tilde{T}_k(x,y)\leq \frac{\sigma k^{\beta}(k+1)^{\delta}(\log\rho^{-1})^{\delta}}{(1+k^{-\beta})^{2k}}.$$
      Since the denominator grows exponentially while the numerator grows only polynomially, this expression tends to zero as $k\to \infty$. Thus
    $$\lim_{k\to \infty}\tilde{T}_k(x,y)=0.$$
    \item $x \in f^{2k}(W_k)$ and $y \notin f^{k+1}(W_k) \cup f^{2k}(W_k)$. 
    
    By construction, the symbolic distance between the points satisfies $d(x,y)^{-1}\leq \rho^{-2k-1}$ and the difference of $L_k$ along these two points is controlled by $\|L_k(x)-L_k(y)\|\leq \sigma \tilde{\gamma}(k)$. Using this inequality and the definition of $\tilde{\gamma}(k)$ in \eqref{gammak}, we obtain
    $$ \tilde{T}_k(x,y)\leq \sigma (\log\rho^{-1})^{\delta}\left(\frac{2k+1}{k^{\frac{\beta}{\delta}}}\right)^{\delta}  \left(\frac{1+k^{-\beta}}{\sigma}\right)^{2k}.$$ 
    
    The first factor decays polynomially because $\beta>\delta$, and the second decays exponentially since $\sigma>1$. Hence, both terms ensure that 
$$\lim_{k\to \infty}\tilde{T}_k(x,y)=0.$$
    \item $x \in f^{k+1}(W_k)$ and $y \in f^{2k}(W_k)$.
    
    In this configuration, both points belong to images of $W_k$ where the perturbation acts, but at different times. 
    
    Since their symbolic sequences differs in the first coordinate, we have $N(x,y)=1$ and thus, $d(x,y)^{-1}=\rho^{-1}$. Moreover, the difference of $L_k$ at these points satisfies $\|L_k(x)-L_k(y)\|\leq \sigma\left(\tilde{\gamma}+\frac{k^{\beta}}{(1+k^{-\beta})^{2k}}\right)$. Therefore,
    $$\tilde{T}_k(x,y)\leq  \sigma(\log\rho^{-1})^{\delta}\left(\tilde{\gamma}+\frac{k^{\beta}}{(1+k^{-\beta})^{2k}}\right),$$
    and since both terms inside the parentheses vanish as $k\to \infty$, we conclude that 
$$\lim_{k\to \infty}\tilde{T}_k(x,y)=0.$$
\end{enumerate}
\end{itemize}

In summary, we have established that  \begin{eqnarray*}
\lim_{k\to \infty}\tilde{T}_k(x,y)=0 \mbox{ for every }x, y \in M.
\end{eqnarray*}
Combining this with the previous estimate on the uniform norm, we may choose $k_0>\tilde{k}$ such that 
\begin{eqnarray*}
    \|L_k-\as1\|_{\delta-\log}<\varepsilon \,\  \,\ \forall k\geq k_0.
\end{eqnarray*}

As we did previously, we can compute the first and second return times, in order to determine the Lyapunov exponets of the perturbation using their definition. Thus, for $\mu_{p}$-almost every point $x \in M$, we have
\begin{eqnarray}
    L_{k}^{\tau_{W_k}}(x) = \begin{pmatrix}
0 & \sin(\theta_k)\sigma^{S_{\tau_{W_k}}(x)}(1+k^{-\beta})^k \\
\sigma^{-S_{\tau_{W_k}}(x)}\frac{(1+k^{-\beta})^{-k}}{\sin(\theta_k)} & 0 
\end{pmatrix}.
\end{eqnarray}
Similarly, the second return time is given by
\begin{eqnarray*}
    L_{k}^{\tau_{W_k}^{(2)}}(x) = \begin{pmatrix}
\sigma^{\tilde{c}_1(x)} &0 \\
0 & \sigma^{-\tilde{c}_1(x)}
\end{pmatrix}.
\end{eqnarray*}
More generally, by recursion, for every $j\geq 1$ we obtain
\begin{eqnarray}
    L_{k}^{\tau_{W_k}^{(2j)}}(x) = \begin{pmatrix}
\sigma^{\tilde{c}_j(x)} &0 \\
0 & \sigma^{-\tilde{c}_j(x)}
\end{pmatrix},
\end{eqnarray}
where the function $\tilde{c}_j$ is the same as that defined in \eqref{cjdef}, but now the return times are taken with respect to the cylinder $W_k$, that is, 
\begin{eqnarray*}\label{cjdef2}
    \tilde{c}_j(x) = \sum_{i=1}^{j} \left(S_{\tau_{W_k}}(f^{\tau^{(2i-1)}_{W_k}}(x)) - S_{\tau_{W_k}}(f^{\tau^{(2i-2)}_{W_k}}(x))\right).
    \end{eqnarray*}

Following the same reasoning as in the previous perturbation, and by the very definition of the perturbation, we conclude that the Lyapunov exponents of $L_k$ vanish. In other words, for all $k\geq 1$, 
\[\lambda_+(L_k, \mu_{p})=0 \]

Therefore, we have proved that for every $\varepsilon>0$, one can construct a $\delta-\log$-Hölder continuous cocycle $L_k$ such that $\lambda_{+}(L_k, \mu_{p})=0$ and is arbitrarily close to the locally constant cocycle $\as1$, $$\|L_k-\as1\|_{\delta-\log}<\varepsilon.$$

This completes the construction and the proof.
\end{proof}

\bibliographystyle{plain}
\bibliography{ref}

\end{document}